\newtheorem{theorem}{Theorem}
\newtheorem{proposition}[theorem]{Proposition}
\newtheorem{lemma}[theorem]{Lemma}
\newtheorem{cor}[theorem]{Corollary}
\newtheorem*{definition}{Definition}
\newcommand{\veq}{\mathrel{\rotatebox{270}{$\simeq$}}}
\def\stackbelow#1#2{\underset{\displaystyle\overset{\displaystyle\veq}{#2}}{#1}}
\begin{document}

\title{Chirality for simple graphs of size up to 12}
\author[H. Choi]{Howon Choi}
\address{Institute of Natural Science, Korea University, 2511 Sejong-ro, Sejong City, 30019, Korea}
\email{howon@korea.ac.kr}
\author[H. Kim]{Hyoungjun Kim}
\address{College of General Education, Kookmin University, Seoul 02707, Korea}
\email{kimhjun@kookmin.ac.kr}
\author[S. No]{Sungjong No}
\address{Department of Mathematics, Kyonggi University, Suwon 16227, Korea}
\email{sungjongno@kgu.ac.kr}

\begin{abstract}
Chirality is one of the important assymmetrical property in wide area of natural science, which has been studied to predict molecular behavior.
One of good methods to analyze molecules with complex structures is representing them as graphs embedded in 3-dimensional space.
So it is important to study the chirality of spatial graphs to understand structure of chiral molecules.
Moreover, Robertson and Seymour's graph minor theorem implies that a set of minor minimal graphs with respect to intrinsic properties is finite.
So it is also important to find a complete set of minor minimal graphs for intrinsic properties.
In this paper, we classify minor minimal intrinsically chiral graphs among simple graphs of size up to twelve.
\end{abstract}

\keywords{chirality, graph, M{\"o}bius ladder}
\subjclass[2020]{05C83, 05C92, 92E10}

\maketitle

\section{Introduction}\label{sec:int}

Chirality is one of the important assymmetrical property in mathematics, physics, chemistry, biology, pharmacology, etc.
In particular, chirality has been studied to predict molecular behavior.
In chemistry, a molecule is said to be {\it chiral\/} if it cannot convert itselt into its mirror image, otherwise it is said to be {\it achiral\/}.
Since chiral molecules are distinguishable from their mirror images, each of them has two types of molecular structures.
Such two types of the same chiral molecule, called {\it stereoisomers\/}, are molecules that have the same molecular formula and the same bonding arrangement, but differ in the arrangement of their atoms in the space.
{\it Enantiomers\/} are pairs of stereoisomers which are mirror images of each other.
Even though enantiomers have the same physical properties such as density, melting points and boiling points, they may have the different chemical properties.
For example, pharmaceuticals using chiral compound, such as thalidomide, ethambutol, penicillin, and naproxen, may have no-effect or side effects in enantiomers.
Therefore, such enantiomers have been actively researched not only in pharmacology but also in wide area of science~\cite{DA,MB,G,JAM,PB}

One of good methods to analyze molecules with complex structures is representing them as graphs embedded in 3-dimensional space which are called {\it spatial graph\/}.
Most of results in the spatial graph theory have their roots on the result of Conway and Gordon~\cite{CG}.
They showed that every embedding of a complete graph $K_6$ contains a non-splittable link and every embedding of a complete graph $K_7$ contains a non-trivial knot.
These properties for $K_6$ and $K_7$ are called {\it intrinsically linked\/} and {\it intrinsically knotted\/}, respectively.
A graph $H$ is a {\it minor\/} of another graph $G$ if $H$ can be obtained from $G$ by contracting edges, deleting edges, and deleting isolated vertices.
If a graph $G$ with intrinsically linked has no proper minor that is the intrinsically linked, then we say $G$ is {\it minor minimal intrinsically linked\/}.
Similarly, we also define {\it minor minimal intrinsically knotted\/}.

\begin{figure}[h!]
    \includegraphics{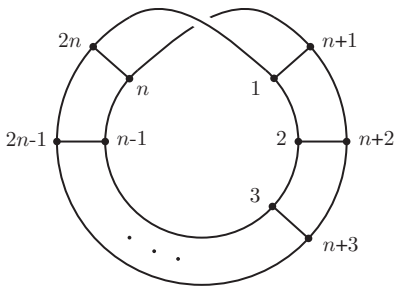}
    \caption{The standard embedding of $M_n$}
    \label{fig:mn}
\end{figure}

A molecule is a group of two or more atoms held together by chemical bonds.
So we study a spatial graph to analyze chirality of molecules with complex structure, since vertices and edges in a spatial graph correspond to atoms and chemical bonds in a molecule, respectively.
An embedding of a graph $G$ is {\it topologically chiral\/} if it cannot be ambient isotopic to its mirror image, otherwise it is {\it topologically achiral\/}.
A graph $G$ is {\it intrinsically chiral\/} if every embedding of $G$ in 3-dimensional space is topologically chiral, otherwise it is {\it achirally embeddable\/}.
Guy and Harary~\cite{GH} first introduced graphs {\it M{\"o}bius ladders\/} $M_n$ which consists of a polygon of length $2n$ and all $n$ chords joining opposite pairs of vertices where $n\ge3$.
Note that for a M{\"o}bius ladder with $n$ chords, they originally defined as $M_{2n}$, but since the M{\"o}bius ladder has been used as $M_n$ in many results in spatial graph theory.
So we follow the notation $M_n$ for a M{\"o}bius ladder with $n$ chords.
We call a polygon of length $2n$ as a {\it loop\/}, denoted by $K$, and the chords as {\it rungs\/}, denoted by $\alpha_1, \dots, \alpha_n$.
Especially, we say $M_n$ is {\it standardly embedded\/} in $S^3$ when $M_n$ is embedded as drawn in Figure~\ref{fig:mn} or its mirror image.
Simon~\cite{S} showed that for every standard embedding of $M_n$, there is no orientation reversing diffeomorphism $h$ of $S^3$ which satisfies $h(M_n)=M_n$ and $h(K)=K$ when $n \ge 3$.
Flapan~\cite{F} showed that every embedding of $M_n$, there is no orientation reversing diffeomorphism $h$ of $S^3$ with $h(M_n)=M_n$ and $h(K)=K$ when $n \ge 3$ and $n$ is odd.
From this result, Flapan and Weaver~\cite{FW1} showed that for positive integer $n$. the complete graphs $K_{4n+3}$ are intrinsically chiral, and all other complete graphs are achirally embeddable.
Flapan and Fletcher~\cite{FF} classified intrinsic chirality for complete multipartite graphs.
Flapan and Weaver~\cite{FW2} defined three types for an automorphism of a graph, and showed that the graph is achirally embeddable if and only if there exists an automorphism of the graph among the three types.

\begin{figure}[h!]
    \includegraphics{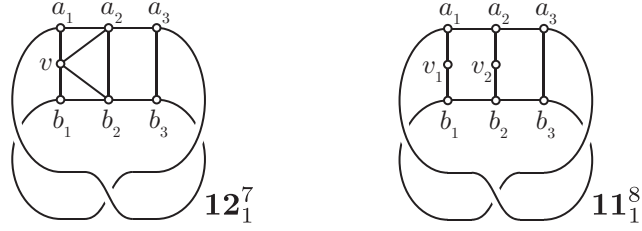}
    \caption{Minor minimal intrinsically chiral graphs $\mathbf{12}^7_1$ and $\mathbf{11}^8_1$}
    \label{fig:g78}
\end{figure}

A graph property $\mathcal{P}$ is {\it minor closed\/} if every minor of a $\mathcal{P}$ graph is also $\mathcal{P}$.
Note that if $\mathcal{P}$ is planarity, linklessly embeddability and knotlessly embeddability, then $\neg \mathcal{P}$ is non-planarity, intrinsic linkedness and intrinsic knottedness, respectively.
Moreover, these three properties are minor closed.
It is known that Robertson and Seymour's~\cite{RS} graph minor theorem implies that the set of minor minimal $\neg \mathcal{P}$ graphs is finite when $\mathcal{P}$ is minor closed.
Complete sets for minor minimal non-planar graphs~\cite{K,W} and intrinsically linked graphs~\cite{RST} were found.
Recently, classfying graphs with minor minimal intrinsically knotted or its related properties have been actively researched~\cite{BM,GMN,KMO,LKLO}.
Flapan and Weaver's result~\cite{FW1} implies that an achirally embeddability is not minor closed.
Mattman~\cite{M} showed that the graph minor theorem also implies that the set of minor minimal $\neg \mathcal{P}$ graphs is finite for any graph property $\mathcal{P}$.
This means that there are finite number of minor minimal intrinsically chiral graphs.
Our overall goal is finding the complete set of minor minimal intrinsically chiral graphs.
Authors~\cite{CKN} classified all simple intrinsically chiral graphs of size up to eleven, and found two minor minimal intrinsically chiral graphs $\varGamma_7$ and $\varGamma_8$ as drawn in Figure~\ref{fig:g78}.
In here, the {\it size\/} of a graph is its number of edges.
Similarly, the {\it order\/} of a graph is its number of vertices.
Henceforth, we use notations $\mathbf{12}^7_1$ and $\mathbf{11}^8_1$ instead of $\varGamma_7$ and $\varGamma_8$, respectively.
Note that $\mathbf{11}^8_1$ means the first simple graph among minor minimal graphs of size eleven and order eight in descending order of the degree sequence of vertices.
In this paper, we present four new simple minor minimal intrinsicallay chiral graphs $\mathbf{12}^7_2$, $\mathbf{12}^8_1$, $\mathbf{12}^8_2$ and  $\mathbf{12}^9_1$ as drawn in Figure~\ref{fig:ic12}.
We also classify all simple minor minimal intrinsically chiral graphs of size twelve.

\begin{figure}[h!]
    \includegraphics{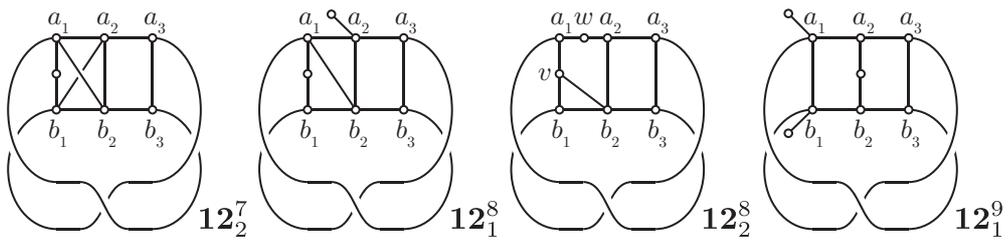}
    \caption{New minor minimal intrinsically chiral graphs}
    \label{fig:ic12}
\end{figure}

\begin{theorem}
There are exactly five simple minor minimal intrinsically chiral graphs of size twelve.
\end{theorem}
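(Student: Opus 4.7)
The plan is to prove the theorem in three phases, parallel to the strategy used in our previous paper~\cite{CKN}. Let $\mathcal{F}=\{\mathbf{12}^7_1,\mathbf{12}^7_2,\mathbf{12}^8_1,\mathbf{12}^8_2,\mathbf{12}^9_1\}$ be the five candidates displayed in Figures~\ref{fig:g78} and~\ref{fig:ic12}. First I would verify that every member of $\mathcal{F}$ is indeed intrinsically chiral. For this I would exhibit in each graph a M\"obius ladder $M_n$ with odd $n\ge 3$ sitting inside as a subgraph in such a way that the outer loop $K$ is preserved under every graph automorphism. Then Flapan's theorem~\cite{F}, combined with the Flapan--Weaver automorphism criterion~\cite{FW2}, obstructs the existence of any order-reversing ``type'' automorphism; since Flapan--Weaver show achiral embeddability is equivalent to the existence of such an automorphism, each of the five graphs is intrinsically chiral. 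The case $\mathbf{12}^7_1$ was already handled in~\cite{CKN}, so only the four new graphs need a fresh argument, and for each the relevant $M_n$ can be read off Figure~\ref{fig:ic12}.

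Second, I would verify minor minimality of each $G\in\mathcal{F}$ by checking that every proper minor of $G$ is achirally embeddable. Because edge contraction and deletion can be combined, it suffices to inspect, up to the automorphism group of $G$, each single-edge deletion and each single-edge contraction, then recurse. For each such minor $G'$ I would produce an explicit achiral embedding by constructing an automorphism of $G'$ of one of the three Flapan--Weaver types and realizing the quotient in $S^3$ with an orientation-reversing symmetry; this makes $G'$ achirally embeddable by~\cite{FW2}. The amount of casework is manageable since the orders of the graphs in $\mathcal{F}$ are small (seven to nine) and the automorphism groups cut the number of representatives down considerably.

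Third, and this is the main obstacle, I must show no other simple graph of size $12$ is minor minimal intrinsically chiral. I would enumerate all connected simple graphs with exactly $12$ edges and minimum degree at least $2$ (vertices of degree $\le 1$ can be removed without affecting chirality), organized by order, and for each graph $G$ either produce a minor isomorphic to one of $\mathbf{11}^8_1$ or a member of $\mathcal{F}$, or prove $G$ is achirally embeddable by exhibiting a Flapan--Weaver automorphism together with an explicit symmetric embedding. The cases split naturally by degree sequence and the combinatorial search can be pruned by standard reductions (removing degree-$2$ vertices via smoothing, and using the size-$11$ classification from~\cite{CKN} to eliminate graphs that contain $\mathbf{11}^8_1$ as a minor). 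The delicate cases are those graphs which are ``close'' to one of the $\mathcal{F}$-graphs but differ by a single edge move; for these one has to carefully argue that a minor in $\mathcal{F}$ arises, or else display a symmetric embedding realizing an orientation-reversing diffeomorphism of $S^3$.

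The decisive obstruction throughout is the M\"obius ladder lemma: whenever a graph contains an $M_n$ (odd $n\ge 3$) whose loop $K$ is invariant under every automorphism, chirality is forced. Consequently the proof reduces to a careful audit of where odd M\"obius ladders with a rigid loop sit inside each candidate, and, on the achiral side, to a finite catalogue of symmetric embeddings. The bulk of the work is the third step's enumeration, but because the graph minor relation and the size-$11$ base case eliminate most candidates immediately, I expect the remaining residue to be tractable in a case-by-case analysis organized by order $n\in\{6,7,8,9,10,11,12\}$.
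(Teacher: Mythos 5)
Your overall architecture (verify chirality of the candidates, verify minimality, then rule out everything else) matches the paper, and your first phase --- locating an odd M\"obius ladder whose loop is forced to be setwise invariant and invoking Flapan's theorem --- is essentially the paper's argument for each of the four new graphs. But your third phase contains a fatal reduction. You propose to enumerate only graphs of minimum degree at least $2$, asserting that ``vertices of degree $\le 1$ can be removed without affecting chirality,'' and to further prune by ``smoothing'' degree-$2$ vertices. Both reductions are false for intrinsic chirality, and they would delete exactly the graphs you are trying to find: $\mathbf{12}^8_1$ has a degree-$1$ vertex, $\mathbf{12}^9_1$ has two of them, and $\mathbf{12}^7_2$, $\mathbf{12}^8_2$ have essential degree-$2$ vertices. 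Removing a pendant vertex or smoothing a degree-$2$ vertex produces a proper minor of size at most $11$, which by the size-$11$ classification is achirally embeddable (none of these minors is $\mathbf{11}^8_1$); yet the original graphs are intrinsically chiral precisely because those low-degree vertices break the automorphisms that would otherwise permit a symmetric embedding. So your enumeration would conclude that at most one graph ($\mathbf{12}^7_1$) survives, contradicting the theorem itself.

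Two further points. For minimality (your second phase), you plan a per-graph audit of all proper minors; the paper avoids this entirely with a general lemma: any simple intrinsically chiral graph of size $12$ with no $\mathbf{11}^8_1$ minor is automatically minor minimal, because every simple proper minor has size at most $11$ and is covered by the earlier classification, while every non-simple non-planar minor (which can only have double edges or a loop, landing in the second or third row of the contraction/deletion ladder) is shown achirally embeddable by an explicit mirror-symmetric construction. Your route would work in principle but duplicates a large amount of casework that the prior classification already absorbs. Finally, your enumeration range $n\in\{6,\dots,12\}$ is wider than necessary: since a connected graph with $\|G\|-|G|\le 2$ is planar, only orders $7$, $8$, $9$ can carry a non-planar simple graph of size $12$, and the order-$\le 6$ case is already excluded by the earlier work; the paper then organizes the surviving cases not by raw enumeration but by how a $K_{3,3}$ or $K_5$ minor arises (via the two graphs $H_1$, $H_2$ that contract to $K_{3,3}$ in one step), which is what makes the casework finite and tractable.
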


These five graphs are $\mathbf{12}^7_1$ and the four new graphs in Figure~\ref{fig:ic12}.
We remark that $\mathbf{11}^8_1$ is the unique simple minor minimal intrinsically chiral graph of size up to eleven.
Thus we have the following corollary.

\begin{cor}
There are exactly six simple minor minimal intrinsically chiral graphs of size up to twelve.
\end{cor}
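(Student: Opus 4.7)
The plan is to establish the theorem in two halves. For the first half I would verify that each of the five candidates $\mathbf{12}^7_1$, $\mathbf{12}^7_2$, $\mathbf{12}^8_1$, $\mathbf{12}^8_2$, and $\mathbf{12}^9_1$ is indeed minor minimal intrinsically chiral. Intrinsic chirality can be reduced to a finite combinatorial check via the Flapan--Weaver automorphism criterion~\cite{FW2}: compute the automorphism group of the graph and confirm that none of its elements is of any of the three admissible types whose existence would guarantee an achiral embedding. Minor minimality is then checked by listing, up to isomorphism, all graphs obtained by a single edge deletion or a single edge contraction (discarding resulting loops, multi-edges, and isolated vertices) and exhibiting an achiral embedding for each such proper minor. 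In practice these achiral embeddings can usually be described by modifying a standard embedding of a smaller M\"obius ladder so that its orientation-reversing symmetry is preserved, or by appealing to a Flapan--Weaver automorphism of the proper minor.

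For the second half, namely completeness, I would show that no other simple graph of size twelve is both intrinsically chiral and minor minimal. Any size-twelve graph that has $\mathbf{11}^8_1$ as a proper minor is automatically not minor minimal, since $\mathbf{11}^8_1$ is intrinsically chiral by~\cite{CKN}; such graphs can be pruned at the start. Likewise, any size-twelve graph having one of the four new candidates as a proper minor (for instance after deletion of an isolated vertex) is handled the same way. For every remaining graph $G$ of size twelve one must either recognize it as one of the five listed graphs or produce an achiral embedding, which, by the Flapan--Weaver criterion again, amounts to finding an automorphism of $G$ of one of the three allowed types.

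The main obstacle is organizing the enumeration. A connected simple graph of size twelve has order between $6$ and $13$ (since $\binom{5}{2}=10<12$ on the low end and a spanning tree forces $n\le 13$ on the high end), and within each order the number of isomorphism classes is large. A workable strategy is to stratify by order and then by degree sequence, prune aggressively using the $\mathbf{11}^8_1$-minor test, and run the Flapan--Weaver automorphism check on what remains; this is the step most likely to require computer assistance. The hardest conceptual point is ensuring exhaustiveness: that the enumeration really covers every isomorphism class of connected simple graph of size twelve avoiding $\mathbf{11}^8_1$ as a minor, and that for each candidate not on the list of five an achiral embedding is actually constructed rather than merely declared plausible. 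Disconnected cases and graphs with isolated vertices reduce to smaller connected ones, so they add no new minor-minimal examples.
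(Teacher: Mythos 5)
Your plan diverges from the paper's in two places, and one of them is a genuine gap. For minor minimality you propose to list the graphs obtained from each candidate by a \emph{single} edge deletion or contraction and to exhibit an achiral embedding of each. That is not enough: minor minimality requires that \emph{every} proper minor, at every depth, fail to be intrinsically chiral, and achiral embeddability is not minor closed (the paper points this out explicitly via \cite{FW1}: $K_7$ is intrinsically chiral yet is a minor of the achirally embeddable $K_8$), so an achiral embedding of a one-step minor says nothing about that minor's own minors. The paper closes this hole with Lemma~\ref{lem:rde}: every simple proper minor has size at most eleven and is therefore achirally embeddable unless it is $\mathbf{11}^8_1$ (by \cite{CKN}), and the non-simple minors arising from contractions are disposed of by the set-wise commutative ladder argument. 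Relatedly, you discard loops and multiple edges when forming minors, whereas the paper insists that non-simple minors must also be checked; your proposal never addresses them.

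The second divergence concerns how intrinsic chirality of the five graphs is certified. You lean on the converse direction of the Flapan--Weaver criterion, namely that the absence of an automorphism of any of the three types forces intrinsic chirality. The paper never uses that direction; it only uses the sufficiency of a type~1 automorphism for achiral embeddability, and it proves chirality of each candidate by Flapan's M\"obius ladder rigidity (Proposition~\ref{prop:fla}), showing that every self-homeomorphism must preserve the loop $K$ of an embedded $M_3$ minor and hence preserves orientation. If the full Flapan--Weaver equivalence applies to these particular graphs (several of which have cut vertices and degree-one vertices), your route would mechanize the chirality check, but you would need to verify that the hypotheses of that theorem are met rather than assume it for arbitrary graphs. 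Finally, for the corollary itself you still need the fact from \cite{CKN} that $\mathbf{11}^8_1$ is the \emph{unique} simple minor minimal intrinsically chiral graph of size at most eleven, so that the total count is $5+1=6$; your write-up invokes \cite{CKN} only as a pruning device and never assembles this count.
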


\section{Terminology}\label{sec:term}

A {\it loop\/} is an edge which connects a vertex to itself.
If there are more than one edge which connect the same pair of vertices, then we say these edges are {\it multiple edges}.
Especially, if multiple edges consist of exactly two edges, then we say {\it double edges\/}.
A {\it simple graph} is a graph without loops and multiple edges.

From now on, let $G=(V,E)$ denote a simple graph with a set of vertices $V$ and a set of edges $E$.
For notational convenience, we introduce the following notations.
\begin{itemize}
    \item $|G|$ is the order of $G$.
    \item $\|G\|$ is the size of $G$.
    \item $e_{vw}$ is an edge which connects vertices $v$ and $w$.
    \item $G/ e$ is a graph obtained from $G$ by contracting an edge $e$.
    \item $G\setminus e$ is a graph obtained from $G$ by deleting an edge $e$.
    \item $G\setminus \{v\}$ is a graph obtained from $G$ by deleting a vertex $v$ and its incident edges.
    \item $G \simeq G'$ denotes that $G$ is isomorphic to $G'$.
\end{itemize}
Note that if $G$ is intrinsically chiral with more than one component, then there exists at least one component which is intrinsically chiral.
This implies that $G$ is not minor minimal intrinsically chiral when $G$ is not connected.
So we further assume that $G$ is connected.

We first consider the achirally embeddability of the given graph.
Flapan and Weaver~\cite{FW2} found three conditions, called automorphism of a graph of types, which guarantee the given graph is achirally embeddable.
Among them, we use the first type as following.
\begin{definition}{\cite{FW2}}
    \normalfont
Let $G$ be a graph with an order 2 automorphism $\phi$.
We say $\phi$ is of {\it type} 1 if the vertices of $G$ can be partitioned into three sets, $V_1$, $W_2$, and $W'_2$ which satisfy:
\begin{enumerate}
    \item $\phi$ fixes $V_1$ and $\phi$ interchanges the vertices of $W_2$ and $W'_2$.
    \item The subgraph induced by $V_1$ is planar.
    \item If there is an edge with vertex $p$ in $W_2$ and vertex $q$ in $W'_2$, then $\phi(p)=q$.
\end{enumerate}
\end{definition}
They showed that if $G$ has an automorphism of type 1, then $G$ is achirally embeddable.
A {\it mirror symmetrical embedding} is an embedding of a graph which is symmetrical on the left and right with respect to a plane as drawn in Figure~\ref{fig:mirror}.
We call $\mathcal{M}$ a {\it mirror plane}.

\begin{figure}[h!]
    \includegraphics[scale=1.3]{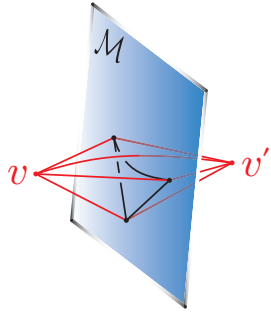}
    \caption{A mirror symmetrical embedding $M(G)^{v}_{v'}$}
    \label{fig:mirror}
\end{figure}

Note that if a graph has a mirror symmetrical embedding, then it has an automorphism of type 1.
This means that the graph is achirally embeddable.
So we give the following two lemmas which are frequently used in the rest of the paper.

\begin{lemma}\label{lem:ae}
    Let $G$ be a simple graph such that two vertices $v$ and $v'$ are adjacent to the same vertices in $G \setminus \{ v, v' \}$. 
    If $G \setminus \{ v, v' \}$ is planar, then $G$ is achirally embeddable.
\end{lemma}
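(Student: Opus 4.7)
The plan is to construct an explicit mirror symmetrical embedding of $G$ and invoke the remark preceding the lemma, namely that a mirror symmetrical embedding yields an automorphism of type 1 and hence achirally embeddability.

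First I would exploit planarity of $H := G \setminus \{v,v'\}$ to fix a planar embedding of $H$ inside a plane $\mathcal{M} \subset S^3$. Choose a point $p \in \mathcal{M}$ that lies in the interior of some face of this embedding (in particular, $p$ is disjoint from the image of $H$), and let $v$ and $v'$ be placed in $S^3 \setminus \mathcal{M}$ on the line through $p$ perpendicular to $\mathcal{M}$, symmetric with respect to $\mathcal{M}$, one on each side. If $e_{vv'}$ is an edge of $G$, realize it as the straight segment from $v$ to $v'$ through $p$; this segment meets $\mathcal{M}$ only at $p$ and therefore avoids $H$.

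Next I would draw the edges incident to $v$. Let $N$ denote the common neighborhood of $v$ and $v'$ in $H$ (which is the full neighborhood of each, by hypothesis). For every $w \in N$, join $v$ to $w$ by the straight segment $\overline{vw}$ in the half-space containing $v$; these segments meet $\mathcal{M}$ only at their endpoints $w$, and hence are pairwise disjoint (apart from the common endpoint $v$) and disjoint from the embedded copy of $H$ in $\mathcal{M}$. Then define the edges at $v'$ as the reflections of the edges at $v$ across $\mathcal{M}$; by construction they satisfy the analogous disjointness properties on the other side. The resulting spatial embedding of $G$ is invariant under reflection in $\mathcal{M}$, i.e. it is mirror symmetrical.

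Finally, this mirror symmetrical embedding provides the required order $2$ automorphism $\phi$ of $G$: take $V_1 = V(H)$, $W_2 = \{v\}$, $W'_2 = \{v'\}$, with $\phi$ fixing $V_1$ pointwise and swapping $v \leftrightarrow v'$. Condition~(1) of type 1 is immediate, condition~(2) is the planarity of $H$, and condition~(3) is vacuous since $|W_2| = |W'_2| = 1$ forces the only possible edge between $W_2$ and $W'_2$ to be $e_{vv'}$, with $\phi(v) = v'$. Hence by Flapan and Weaver's criterion, $G$ is achirally embeddable. The only delicate point is the geometric realization step in the second paragraph, where one must ensure that the radial edges from $v$ to the vertices of $N \subset \mathcal{M}$ can be drawn without crossings in the upper half-space; this is handled cleanly by the straight-line construction, since the interior of each segment lies strictly off $\mathcal{M}$ and two such segments share only the apex $v$.
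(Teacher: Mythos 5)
Your construction is correct and is exactly the argument the paper intends: the paper states Lemma~\ref{lem:ae} without a formal proof, but the surrounding text makes clear that the justification is precisely the mirror symmetrical embedding $M(G)^{v}_{v'}$ you build (planar part on $\mathcal{M}$, $v$ and $v'$ reflected across it), combined with the observation that a mirror symmetrical embedding yields a type~1 automorphism in the sense of Flapan and Weaver. Your version simply spells out the geometric details (straight segments off the plane, the possible edge $e_{vv'}$ through an interior face point), which the paper leaves implicit.
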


Especially, a graph which satisfies the assumption of Lemma~\ref{lem:ae} has a mirror symmetrical embedding such that each of the left and the right side of the mirror plane only contains exactly one vertex $v$ and $v'$, respectively.
We denote $M(G)^{v}_{v'}$ for a such mirror symmetrical embedding.

\begin{lemma}\label{lem:c4}
Let $G$ be a simple graph which has two mirror symmetrical embeddings $M(G)^{v_1}_{v_2}$ and $M(G)^{v_3}_{v_4}$ such that $v_1$ and $v_2$ are adjacent to $v_3$ and $v_4$.
Further assume that a simple graph $G'$ obtained from $G$ by adding an edge.
If both $G'\backslash\{v_1,v_2\}$ and $G'\backslash\{v_3,v_4\}$ are planar, then $G'$ is achirally embeddable.
\end{lemma}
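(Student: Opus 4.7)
The plan is to reduce Lemma~\ref{lem:c4} to Lemma~\ref{lem:ae} by showing that, no matter which edge is added to $G$, at least one of the two pairs $\{v_1,v_2\}$ or $\{v_3,v_4\}$ continues to satisfy the hypotheses of Lemma~\ref{lem:ae} in $G'$.

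First, I would extract the key consequence of each mirror symmetrical embedding. In $M(G)^{v_1}_{v_2}$ the associated order two automorphism fixes every vertex on the mirror plane and interchanges $v_1$ with $v_2$, which forces $v_1$ and $v_2$ to share the same set of neighbors in $G\setminus\{v_1,v_2\}$. Applying the same reasoning to $M(G)^{v_3}_{v_4}$ gives the corresponding property for the pair $v_3,v_4$.

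Next I would write the added edge as $e = uw$ and divide into cases according to which of the sets $\{v_1,v_2\}$ and $\{v_3,v_4\}$ its endpoints meet. If $\{u,w\}\cap\{v_1,v_2\}=\emptyset$, then the neighborhoods of $v_1$ and $v_2$ in $G'\setminus\{v_1,v_2\}$ agree with their neighborhoods in $G\setminus\{v_1,v_2\}$ and are therefore still equal; combined with the assumed planarity of $G'\setminus\{v_1,v_2\}$, Lemma~\ref{lem:ae} furnishes a mirror symmetrical embedding $M(G')^{v_1}_{v_2}$, so $G'$ is achirally embeddable. Symmetrically, if $\{u,w\}\cap\{v_3,v_4\}=\emptyset$ the same argument with the roles of the two pairs interchanged gives $M(G')^{v_3}_{v_4}$.

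Finally, I would observe that any edge falling outside both of these cases must have one endpoint in $\{v_1,v_2\}$ and the other in $\{v_3,v_4\}$; but by the adjacency hypothesis every such edge already lies in $G$, so it cannot be the edge added to form the simple graph $G'$. This rules out the remaining possibility and closes the case analysis. The argument is essentially bookkeeping rather than geometry, and the only point that requires care is verifying that the adjacency hypothesis between $\{v_1,v_2\}$ and $\{v_3,v_4\}$ is exactly what is needed to eliminate the cross case, while the two planarity hypotheses are exactly what is needed to feed into Lemma~\ref{lem:ae} in the two surviving branches.
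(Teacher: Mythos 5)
Your proposal is correct and follows essentially the same argument as the paper: the added edge either misses $\{v_1,v_2\}$, in which case Lemma~\ref{lem:ae} applies to that pair, or it is incident to one of $v_1,v_2$, in which case simplicity together with the adjacency hypothesis forces it to miss $\{v_3,v_4\}$ and Lemma~\ref{lem:ae} applies there instead. The paper phrases this as a proof by contradiction rather than a direct case split, but the content is identical.
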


\begin{proof}
Let $G$ and $G'$ be graphs satisfying the assumption, respectively.
Further assume that $e$ is the added edge to obtain $G'$ from $G$.
Suppose for the contradiction that $G'$ is intrinsically chiral.
If $e$ is not adjacent to any vertex in $\{v_1,v_2\}$, then $G'$ also has a mirror symmetrical embedding $M(G')^{v_1}_{v_2}$.
So $e$ is adjacent to at least one vertex in $\{v_1,v_2\}$.
Without loss of generality, we assume that $e$ is connected to $v_1$.
Since $G'$ is simple, $e$ cannot be connected to $v_3$ or $v_4$.
This implies that $G'$ has a mirror symmetrical embedding $M(G')^{v_3}_{v_4}$, a contradiction.
\end{proof}

Now we consider the intrinsic chirality of a graph.
To determine whether the given graph is intrinsically chiral, we use the following proposition which is shown by Flapan~\cite{F}.

\begin{proposition}\label{prop:fla}
Let $M_n$ be a M{\"o}bius ladder which is embedded in $S^3$ with loop $K$, where $n$ is an odd number.
Then there is no diffeomorphism $h:S^3 \rightarrow S^3$ which is orientation reversing with $h(M_n)=M_n$ and $h(K)=K$.
\end{proposition}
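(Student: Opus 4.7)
The plan is to construct an integer-valued topological invariant $\sigma(M_n,K)$ --- essentially Simon's chirality invariant --- of the pair $(M_n,K) \subset S^3$, satisfying two properties: (i) every orientation-reversing self-homeomorphism of $S^3$ that carries $(M_n,K)$ to itself sends $\sigma$ to $-\sigma$; and (ii) $\sigma(M_n,K)$ is odd (hence nonzero) for every embedding when $n$ is odd. From these the proposition follows at once: the hypothesised $h$ would preserve the ambient isotopy class of $(M_n,K)$ and so preserve $\sigma$, while simultaneously negating $\sigma$ by (i); hence $\sigma = 0$, contradicting (ii).

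The invariant $\sigma$ is assembled from linking numbers of cycles in $M_n$. Orient $K$ and label its vertices cyclically with rung $\alpha_i$ joining $v_i$ to $v_{i+n}$. For each suitable subcollection of rungs, the rungs together with chosen arcs of $K$ assemble into a family of disjoint oriented cycles in $M_n$, and $\sigma$ is a signed sum of the pairwise linking numbers of these cycles, taken over all such subcollections. The first step is to verify that $\sigma$ is an ambient-isotopy invariant of the spatial graph, which reduces to checking the Reidemeister moves that involve strands of $M_n$. Property (i) is then direct: any $h$ as in the statement permutes the $n$ rungs (since it fixes $K$ and $M_n$ setwise), so it merely relabels the summands of $\sigma$, while orientation reversal of $S^3$ negates every linking number; the net effect is $\sigma \mapsto -\sigma$.

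The main obstacle is establishing (ii). The strategy I would follow is to show that $\sigma \pmod{2}$ depends only on the abstract graph $M_n$, independent of the embedding, by checking that each crossing change of the spatial graph alters $\sigma$ by an even integer. A case analysis --- according to whether the two strands being swapped at the crossing are both rungs, both arcs of $K$, or one of each --- enumerates the summands of $\sigma$ that change by $\pm 1$, and shows, using the symmetric combinatorial structure of $M_n$ (each rung joining antipodal vertices of a $2n$-cycle), that the affected summands pair up to give an even total change. It then suffices to compute $\sigma \pmod{2}$ on a single convenient embedding, for which the standard embedding of Figure~\ref{fig:mn} serves; there the linking numbers may be read directly from the planar diagram and a combinatorial count, sensitive to the parity of $n$, produces $\sigma \equiv 1 \pmod{2}$ precisely when $n$ is odd. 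It is at this final parity count --- not at the construction or ambient-isotopy invariance of $\sigma$ --- that the odd-$n$ hypothesis is used essentially, and this is where the delicate combinatorial work of the proof resides.
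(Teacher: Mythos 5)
The paper does not prove this proposition; it is quoted from Flapan's paper \cite{F}, so your attempt has to be measured against that source. Your overall logical frame (an integer invariant negated by orientation-reversing homeomorphisms but forced to be odd by the combinatorics of $M_n$) is the right shape, but the invariant you propose cannot exist. You define $\sigma$ as a signed sum of pairwise linking numbers of \emph{disjoint} cycles of the spatial graph $M_n$, each cycle built from rungs and arcs of $K$. Every cycle of $M_n$ meets $K$, so all the linking numbers in question are between pairs of disjoint cycles of the graph itself. For $n=3$ this already collapses: $M_3\cong K_{3,3}$ is bipartite with six vertices and girth four, so it contains no two vertex-disjoint cycles whatsoever, and your sum is empty, hence $\sigma=0$, never odd. (Independently, $K_{3,3}$ is linklessly embeddable, so even a nonempty sum of linking numbers of disjoint cycles would vanish in a linkless embedding.) Since every application of the proposition in this paper produces an $M_3$ minor, your claim (ii) fails exactly in the case that is needed. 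This is not a gap in the ``delicate combinatorial work'' you defer; it shows that no Conway--Gordon-type count of linking numbers inside $S^3$ can prove the statement.

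The missing idea in Flapan's argument is to leave $S^3$: one passes to the $2$-fold cover of $S^3$ branched along the knot $K$. There each rung $\alpha_i$, whose endpoints lie on the branch locus, lifts to a closed curve $A_i$; because the branched double cover of a knot is a $\mathbb{Z}/2$-homology sphere, the linking numbers $\mathrm{lk}(A_i,A_j)$ are defined in $\mathbb{Z}_{(2)}$ and reduce mod $2$, and the interleaving of the endpoints of any two rungs around $K$ (the defining feature of a M\"obius ladder) forces $\mathrm{lk}(A_i,A_j)\equiv 1 \pmod 2$ for \emph{every} embedding --- this is the correct substitute for your property (ii). An orientation-reversing $h$ with $h(K)=K$ lifts to an orientation-reversing map of the cover that permutes the $A_i$ through a dihedral action on $\mathbb{Z}/n$ and negates all linking numbers; when $n$ is odd, this action always fixes some unordered pair $\{i,j\}$ (or, for a rotation of odd order, returns a pair to itself after an odd number of sign flips), forcing $\mathrm{lk}(A_i,A_j)=-\mathrm{lk}(A_i,A_j)=0$ and contradicting the oddness above. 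Your step (i) also needs this branched-cover setting to make the sign bookkeeping honest; as written, the claim that the relabelling of summands preserves the signs in your ``signed sum'' is unsupported.
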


Note that we only deal with intrinsic chirality of a simple connected graph of size twelve in the main theorem.
The authors~\cite{CKN} showed that there does not exist any simple connected intrinsically chiral graph of order up to six.
Furthermore, they also introduce the following lemma.

\begin{lemma}{\cite{CKN}} \label{lem:pla}
    Let $G$ be a connected graph.
    If $\|G\|-|G| \le 2$, then $G$ is planar.
\end{lemma}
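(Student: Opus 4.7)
The plan is to argue the contrapositive: assuming $G$ is non-planar, I will show $\|G\| - |G| \ge 3$. By Kuratowski's theorem, a non-planar graph contains a subgraph $H$ that is a subdivision of $K_5$ or $K_{3,3}$, and the strategy is to transfer the edge-vertex excess of $H$ up to that of $G$.

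First, subdividing an edge inserts one new vertex of degree two and replaces one edge by two, so the quantity $\|\cdot\| - |\cdot|$ is invariant under subdivision. Since this invariant equals $9 - 6 = 3$ for $K_{3,3}$ and $10 - 5 = 5$ for $K_5$, any such subdivision $H$ satisfies $\|H\| - |H| \ge 3$.

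Second, a subdivision of a connected graph is connected, and $G$ is connected by hypothesis, so both $H$ and $G$ are connected. The first Betti number $\beta(F) := \|F\| - |F| + c(F)$, where $c(F)$ is the number of connected components of $F$, is monotone under taking subgraphs, because the cycle space of a subgraph injects into the cycle space of the ambient graph. Applied to $H \subseteq G$, both connected, this gives $\|H\| - |H| + 1 \le \|G\| - |G| + 1$, whence $\|G\| - |G| \ge 3$, yielding the desired contradiction.

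The proof is essentially routine once Kuratowski's theorem is invoked; the only step needing a moment's care is the monotonicity of the cycle rank under taking connected subgraphs. One can equivalently see this concretely by building $G$ from $H$: one must adjoin the $|G| - |H|$ vertices outside $H$ together with enough edges to keep the graph connected, requiring at least $|G| - |H|$ new edges, and hence not decreasing $\|\cdot\| - |\cdot|$.
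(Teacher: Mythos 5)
Your proof is correct. The paper itself gives no proof of this lemma---it is quoted from the authors' earlier work \cite{CKN}---but your argument is the standard one: Kuratowski plus the facts that $\|\cdot\|-|\cdot|$ is a subdivision invariant equal to $3$ for $K_{3,3}$ and $5$ for $K_5$, and that the cycle rank $\|F\|-|F|+c(F)$ is monotone under taking subgraphs. Both of your justifications for the monotonicity step (injection of cycle spaces, or counting the edges needed to attach the remaining vertices while staying connected) are sound.
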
    

By this lemma, we only need to consider the cases that the order of $G$ is seven, eight or nine, since $G$ is planar when $|G| \ge 10$.
Since the only simple connected intrinsically chiral graph of size up to eleven is $\mathbf{11}^8_1$~\cite{CKN}, we may assume that $G$ does not contain $\mathbf{11}^8_1$ as a minor.

We first show that the four graphs $\mathbf{12}^7 _2$, $\mathbf{12}^8 _1$, $\mathbf{12}^8 _2$ and $\mathbf{12}^9 _1$ are minor minimal intrinsically chiral in Section~\ref{sec:mmic},
and then we classify the simple minor minimal intrinsically chiral graphs of order seven, eight and nine in Section~\ref{sec:7v}, \ref{sec:8v} and \ref{sec:9v}, respectively.

\section{The minor minimal intrinsically chiral graphs}\label{sec:mmic}

In this section, we show that four graphs, $\mathbf{12}^7_2$, $\mathbf{12}^8_1$, $\mathbf{12}^8_2$ and $\mathbf{12}^9_1$, are minor minimal intrinsically chiral.
Even though these four graphs are simple, not all minors of the four graphs are simple.
Hence, we should also check the intrinsic chirality of non-simple minors included in the four graphs.
To conveniently determine the intrinsic chirality for all minors of a graph $G$, we use the following diagram, called {\it set-wise commutative ladder}, which consists of sets of minors of $G$.

{\small{ $$\xymatrix{
 \ \ \ \ (|G|,\|G\|) \ \ \ \  \ar[r]^-{ \mathbb {D}} \ar[d]_{ \mathbb{C}} & \ \ (|G|,\|G\|-1) \ \ \ar[r] \ar[d] & \ \ (|G|,\|G\|-2) \ \ \ar[r] \ar[d] & \cdots\\
 (|G|-1,\|G\|-1) \ar[r] \ar[d] & (|G|-1,\|G\|-2) \ar[r] \ar[d] & (|G|-1,\|G\|-3)\ar[r]\ar[d]   & \cdots \\
 (|G|-2,\|G\|-2)\ar[r] \ar[d] &(|G|-2,\|G\|-3) \ar[r] \ar[d]& (|G|-2,\|G\|-4)\ar[r]\ar[d] & \cdots \\
 \vdots  & \vdots & \vdots & \\
 },$$ }}
where $\mathbb{D}  : G \to G\setminus e$ and $\mathbb{C} :G \to G/e $ for any edge $e$ of $G$.

Note that the graph obtained from a simple graph by an edge deletion is simple, and the graph obtained from a connected graph by an edge contraction is connected.

\begin{lemma}\label{lem:rde}
Let $G$ be a simple graph of size twelve which does not contain $\mathbf{11}^8_1$ as a minor.
If $G$ is intrinsically chiral, then it is minor minimal.
\end{lemma}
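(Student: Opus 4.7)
The plan is to show that every proper minor $H$ of $G$ is achirally embeddable; together with the hypothesis that $G$ is intrinsically chiral, this establishes minor-minimality. First I would observe the size bound $\|H\|\le 11$, since each edge deletion and each edge contraction strictly decreases the number of edges and a proper minor requires at least one such operation.

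Next I would reduce to the simple connected case. Let $H'$ be the simplification of $H$, obtained by deleting every loop and replacing each maximal bundle of parallel edges by a single edge. Then $H'$ is a simple minor of $G$ with $\|H'\|\le 11$, and its connected components $C_1,\dots,C_k$ are simple connected minors of $G$ with $\|C_i\|\le 11$. Since $G$ has no $\mathbf{11}^8_1$-minor by hypothesis, no $C_i$ is isomorphic to $\mathbf{11}^8_1$. The authors' previous classification~\cite{CKN} states that $\mathbf{11}^8_1$ is the unique simple connected intrinsically chiral graph of size at most eleven, so every $C_i$ admits an achiral embedding. Arranging these embeddings pairwise-disjointly and symmetrically about a single mirror plane produces an achiral embedding of $H'$.

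Finally I would lift the achiral embedding of $H'$ to an achiral embedding of $H$ by reintroducing the absorbed loops and parallel edges. Each parallel edge beyond the representative kept in $H'$ is drawn as a short arc inside a thin tubular neighborhood of its representative in $H'$, and each loop is drawn as a small arc at its base vertex; both can be done within mirror-invariant neighborhoods, so the lifted embedding remains mirror-symmetric and $H$ is achirally embeddable. I expect the main obstacle to lie precisely in this final extension step: one must verify that inserting extra parallel strands and loops does not introduce knotting or linking that would break the mirror symmetry of the embedding of $H'$. Confining each new arc to an arbitrarily thin, mirror-invariant tubular neighborhood of an existing edge or vertex resolves this, since the symmetry acts trivially there, but writing the argument carefully requires some attention.
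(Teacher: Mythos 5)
Your reduction of simple minors to the classification in~\cite{CKN} matches the paper, and the size bound $\|H\|\le 11$ is fine. The gap is in your treatment of non-simple minors, which is exactly the part the paper's proof is really about. You pass from $H$ to its simplification $H'$, invoke achiral embeddability of the components of $H'$, and then claim you can reinsert the absorbed loops and parallel edges ``within mirror-invariant neighborhoods.'' This conflates two different notions: an \emph{achirally embeddable} graph admits some embedding carried to itself by some orientation-reversing homeomorphism $h$ of $S^3$, but $h$ need not be a reflection, need not have order two, and in general induces a nontrivial automorphism of $H'$ that permutes edges and vertices. If the edge $e$ you wish to double is sent by $h$ to a different edge $h(e)$, then adding a parallel strand only along $e$ (the multiplicities are dictated by $H$, not chosen by you) destroys the symmetry, and no amount of shrinking the tubular neighborhood repairs this. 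Likewise, a non-planar $H'$ cannot lie in a mirror plane, so ``arranging the components symmetrically about a single mirror plane'' is not available; the symmetry necessarily moves parts of $H'$, and you must check that the doubled edges and loops sit at locations compatible with it.

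The paper closes exactly this gap by exploiting the smallness of the non-simple non-planar minors rather than a general lifting principle. Using the set-wise commutative ladder, any such minor $G'$ arises from at most two contractions, so either $G'$ has size at most eleven with a pair of double edges whose removal leaves a planar graph or $K_{3,3}$, or $G'$ has size at most ten with double edges or a loop over a $K_{3,3}$ minor. In each case an explicit mirror-symmetric embedding is built: the planar part is placed in the mirror plane with the two parallel strands on opposite sides, and in the $K_{3,3}$ case one uses the specific fact that $K_{3,3}$ admits a mirror-symmetric embedding with any prescribed pair of non-adjacent vertices (or the looped vertex) lying on the mirror plane. Your argument, as written, asserts the conclusion of this case analysis without the geometric input that makes it true; to repair it you would either need to prove that every achirally embeddable graph of the relevant kind admits an achiral symmetry fixing setwise each edge to be doubled and each vertex to carry a loop, or carry out the explicit enumeration as the paper does.
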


\begin{proof}
Let $G$ be a simple intrinsically chiral graph of size twelve which does not contain $\mathbf{11}^8_1$ as a minor.
Since $\mathbf{11}^8_1$ is the unique simple intrinsically chiral graph of size up to eleven, it sufficient to show that every non-simple minor of $G$ is achirally embeddable.
Let $G'$ be a minor of $G$ which is non-simple and non-planar.
Now we consider the set-wise commutative ladder of $G$ as following,
$${\small{ \xymatrix{
 \ \ \ \ (|G|,12) \ \ \ \  \ar[r]^-{ \mathbb {D}} \ar[d]_{ \mathbb{C}} & \ \ (|G|,11) \ \ \ar[r] \ar[d] & \ \ (|G|,10) \ \ \ar[r] \ar[d] & \cdots\\
 (|G|-1,11) \ar[r] \ar[d] & (|G|-1,10) \ar[r] \ar[d] & (|G|-1,9)\ar[r]\ar[d]   & \cdots \\
 (|G|-2,10)\ar[r] \ar[d] &(|G|-2,9) \ar[r] \ar[d]& (|G|-2,8)\ar[r]\ar[d] & \cdots \\
 \vdots  & \vdots & \vdots & \\
 }.}}$$
We remark that every graph in the first row is simple, since an edge deletion preserves simplicity of a graph.
Moreover the size of a graph cannot exceed nine, when it is obtained from $G$ by using edge contractions more than twice.
Since a non-simple graph of size up to nine is planar, we have two cases that $G'$ is in the second or the third row.

First assume that $G'$ is in the second row.
Then $G'$ has double edges, since every graph in the second row is obtained from simple graphs by an edge contraction once.
Let $e_1$ and $e_2$ be double edges of $G'$.
Since the size of $G'$ is at most eleven, $G' \setminus (e_1 \cup e_2)$ is either planar or $K_{3,3}$
If $G' \setminus (e_1 \cup e_2)$ is planar, then $G'$ is achirally embeddable.
In detail, first put $G' \setminus (e_1 \cup e_2)$ on the mirror plane.
Next, put $e_1$ and $e_2$ to be symmetrically embedded on the left and right with respect to the mirror plane.
Thus $G' \setminus (e_1 \cup e_2)$ is $K_{3,3}$(or may have an isolated vertex).
Note that every pair of non-adjacent vertices of $G' \setminus (e_1 \cup e_2)$ can be lied on the mirror plane.
Similar with the above case, we have a mirror symmetrical embedding of $G'$.
So $G'$ is achirally embeddable.

Now assume that $G'$ is in the third row.
The size of every graph in the third row is at most ten.
This implies that $G'$ has $K_{3,3}$ as a minor.
So there are two cases that $G'$ has either double edges or a loop.
If $G'$ has double edges, then the graph obtained from $G'$ by deleting double edges is planar.
If $G'$ has a loop, then there is a mirror symmetrical embedding of $K_{3,3}$ such that the vertex connected to the loop is lied on the mirror plane.
Therefore $G'$ is achirally embeddable in both cases.
\end{proof}

We remark that four graphs $\mathbf{12}^7_2$, $\mathbf{12}^8_1$, $\mathbf{12}^8_2$ and $\mathbf{12}^9_1$ do not contain $\mathbf{11}^8_1$ as a minor.
Thus it is sufficient to show that these four graphs are intrinsically chiral by Lemma~\ref{lem:rde}.

\subsection{$\mathbf{12}^7_2$ is minor minimal intrinsically chiral}\label{subsec:72}\

Let $h:S^3 \rightarrow S^3$ be a homeomorphism such that $h(\mathbf{12}^7_2) = \mathbf{12}^7_2$.
The homeomorphism $h$ induces an automorphism on the vertices of $\mathbf{12}^7_2$.
Note that $\mathbf{12}^7_2$ consists of four degree 4 vertices $a_1$, $a_2$, $b_1$ and $b_2$, two degree 3 vertices $a_3$ and $b_3$, and one degree 2 vertex $v$.
More precisely, $a_1$ and $b_1$ are adjacent to $v$, but $a_2$ and $b_2$ are not adjacent to $v$.
So the connection of a vertex in $\{ a_1, b_1 \}$ is different from a vertex in $\{ a_2, b_2 \}$.
Thus we have that 
\begin{eqnarray*}
  \{a_1,~b_1\} \xlongrightarrow{h} \{a_1,~b_1\}\\
  \{a_2,~b_2\} \xlongrightarrow{h} \{a_2,~b_2\}\\
  \{a_3,~b_3\} \xlongrightarrow{h} \{a_3,~b_3\}\\
  \{ v \} \xlongrightarrow{h} \{ v \} \qquad 
\end{eqnarray*}
Let $K=a_1a_2a_3b_1b_2b_3a_1$ be a cycle of $\mathbf{12}^7_2$.
Then we have
\begin{eqnarray*}
h(a_1a_2a_3b_1b_2b_3a_1) =
\begin{cases}
 a_1a_2a_3b_1b_2b_3a_1 & \text{if~}h(a_1)=a_1,\\
 b_1b_2b_3a_1a_2a_3b_1 & \text{if~}h(a_1)=b_1.
\end{cases}
\end{eqnarray*}
Thus $h(K)=K$ in $\mathbf{12}^7_2$.

Now consider a graph $M_3$ which is obtained from $\mathbf{12}^7_2$ by deleting two edges $e_{a_1b_2}$ and $e_{b_1a_2}$ and contracting an edge $e_{va_1}$.
Since a homeomorphism $h$ satisfies $h(M_3)=M_3$ and $h(K)=K$, $h$ cannot be an orientation reversing homeomorphism by Proposition~\ref{prop:fla}.
Therefore, $\mathbf{12}^7_2$ is intrinsically chiral.

\subsection{$\mathbf{12}^8_1$ is minor minimal intrinsically chiral}\label{subsec:81}\

Let $h:S^3 \rightarrow S^3$ be a homeomorphism such that $h(\mathbf{12}^8_1) = \mathbf{12}^8_1$.
The homeomorphism $h$ induces an automorphism on the vertices of $\mathbf{12}^8_1$.
Note that $\mathbf{12}^8_1$ consists of two degree 4 vertices $a_2$ and $b_2$, four degree 3 vertices $a_1$, $a_3$, $b_1$ and $b_3$, one degree 2 vertex $v$, and one degree 1 vertex $w$.
More precisely, $a_2$ is adjacent to $w$, but $b_2$ is not adjacent to $w$.
So the connection of $a_2$ is different from a vertex of $b_2$.
Similarly, $a_1$ and $b_1$ are adjacent to $v$, but $a_2$ and $b_2$ are not adjacent to $v$.
So the connection of a vertex in $\{ a_1, b_1 \}$ is different from a vertex in $\{ a_2, b_2 \}$.
Thus $h$ induces identity automorphism.
This implies that $h(K)=K$ in $\mathbf{12}^8_1$.

Now consider a graph $M_3$ which is obtained from $\mathbf{12}^8_1$ by deleting an edge $e_{a_1b_2}$, and contracting two edges $e_{va_1}$ and $e_{wa_2}$.
Since a homeomorphism $h$ satisfies $h(M_3)=M_3$ and $h(K)=K$, $h$ cannot be an orientation reversing homeomorphism by Proposition~\ref{prop:fla}.
Therefore, $\mathbf{12}^8_1$ is intrinsically chiral.

\subsection{$\mathbf{12}^8_2$ is minor minimal intrinsically chiral}\label{subsec:82}\

Let $h:S^3 \rightarrow S^3$ be a homeomorphism such that $h(\mathbf{12}^8_2) = \mathbf{12}^8_2$.
The homeomorphism $h$ induces an automorphism on the vertices of $\mathbf{12}^8_2$.
Note that $\mathbf{12}^8_2$ consists of a degree 4 vertex $b_2$, six degree 3 vertices $v$, $a_1$, $a_2$, $a_3$, $b_1$ and $b_3$, and one degree 2 vertex $w$.
More precisely, $a_2$ the only degree 3 vertex that is adjacent to both a degree 4 vertex $b_2$ and a degree 2 vertex $w$.
$a_1$ is the only vertex which is adjacent to $w$ among $\{v, a_1, a_3, b_1, b_3 \}$
Similarly, $a_3$ is the only vertex which is adjacent to $a_2$ among $\{v, a_3, b_1, b_3 \}$, and $b_3$ is the only vertex which is adjacent to both $a_1$ and $a_3$ among $\{v, b_1, b_3 \}$.
Finally the connection of $v$ is different from a vertex of $b_1$.
Thus $h$ induces identity automorphism.
This implies that $h(K)=K$ in $\mathbf{12}^8_2$.

Now consider a graph $M_3$ which is obtained from $\mathbf{12}^8_1$ by deleting an edge $e_{vb_2}$, and contracting two edges $e_{va_1}$ and $e_{wa_1}$.
Since a homeomorphism $h$ satisfies $h(M_3)=M_3$ and $h(K)=K$, $h$ cannot be an orientation reversing homeomorphism by Proposition~\ref{prop:fla}.
Therefore, $\mathbf{12}^8_2$ is intrinsically chiral.

\subsection{$\mathbf{12}^9_1$ is minor minimal intrinsically chiral}\label{subsec:91}\

Let $h:S^3 \rightarrow S^3$ be a homeomorphism such that $h(\mathbf{12}^9_1) = \mathbf{12}^9_1$.
The homeomorphism $h$ induces an automorphism on the vertices of $\mathbf{12}^9_1$.
Note that $\mathbf{12}^9_1$ consists of two degree 4 vertices $a_1$ and $b_1$, four degree 3 vertices $a_2$, $a_3$, $b_2$ and $b_3$, and one degree 2 vertex $v$, and two degree 1 vertex $w_1$ and $w_2$.
More precisely, $a_2$ and $b_2$ are adjacent to $v$, but $a_3$ and $b_3$ are not adjacent to $v$.
So the connection of a vertex in $\{ a_2, b_2 \}$ is different from a vertex in $\{ a_3, b_3 \}$.
Thus we have that 
\begin{eqnarray*}
  \{w_1,~w_2\} \xlongrightarrow{h} \{w_1,~w_2\}\\
  \{a_1,~b_1\} \xlongrightarrow{h} \{a_1,~b_1\}\\
  \{a_2,~b_2\} \xlongrightarrow{h} \{a_2,~b_2\}\\
  \{a_3,~b_3\} \xlongrightarrow{h} \{a_3,~b_3\}\\
  \{ v \} \xlongrightarrow{h} \{ v \} \qquad 
\end{eqnarray*}
Let $K=a_1a_2a_3b_1b_2b_3a_1$ be a cycle of $\mathbf{12}^9_1$.
Then we have
\begin{eqnarray*}
h(a_1a_2a_3b_1b_2b_3a_1) =
\begin{cases}
 a_1a_2a_3b_1b_2b_3a_1 & \text{if~}h(a_1)=a_1,\\
 b_1b_2b_3a_1a_2a_3b_1 & \text{if~}h(a_1)=b_1.
\end{cases}
\end{eqnarray*}
Thus $h(K)=K$ in $\mathbf{12}^9_1$.

Now consider a graph $M_3$ which is obtained from $\mathbf{12}^9_1$ by contracting three edges $e_{w_1a_1}$, $e_{w_2b_1}$ and $e_{va_2}$.
Since a homeomorphism $h$ satisfies $h(M_3)=M_3$ and $h(K)=K$, $h$ cannot be an orientation reversing homeomorphism by Proposition~\ref{prop:fla}.
Therefore, $\mathbf{12}^9_1$ is intrinsically chiral.

\section{The order of $G$ is seven}\label{sec:7v}

In this section, we assume that $G$ is a non-planar graph of order seven and size twelve.
So $G$ has $K_{3,3}$ or $K_5$ as a minor.
First suppose that $G$ has $K_{3,3}$ as a minor.
Then there exist three edges $e_1$, $e_2$ and $e_3$ such that $K_{3,3}$ is obtained from $G$ by using two edge deletions at $e_1$ and $e_2$, and one edge contraction at $e_3$.
$$\xymatrix{
\ \ G \ \ \ar[r]^-{ \mathbb {D}} & \ \ G \setminus e_1 \ \ \ar[r]^-{ \mathbb {D}} & \ \ (G \setminus e_1) \setminus e_2 \ \ \ar[r]^-{ \mathbb {C}} & \ \ \stackbelow{((G \setminus e_1) \setminus e_2) /e_3}{K_{3,3}}\ \ \\
 }$$
In the process to obtain $K_{3,3}$ from $G$, we use all edge deletions before edge contractions.
Since simplicity is preserved by edge deletions, all four graphs in the process are simple.
Note that there are exactly two graphs $H_1$ and $H_2$ which can obtain $K_{3,3}$ by exactly one edge contraction as drawn in Figure~\ref{fig:k33ec}.
In here, $H_1$ has a degree 1 vertex and $H_2$ has a degree 2 vertex.

\begin{figure}[h!]
    \includegraphics{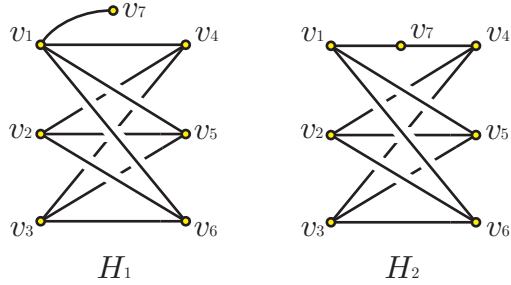}
    \caption{Two graphs $H_1$ and $H_2$ which can obtain $K_{3,3}$ by exactly one edge contraction}
    \label{fig:k33ec}
\end{figure}

Now we have two cases that $(G \setminus e_1) \setminus e_2$ is isomorphic to $H_1$ and $H_2$.
We prove the first and the second cases in Subsection~\ref{subsec:7v1} and \ref{subsec:7v2}, respectively.
In addition, we deal with the case that $G$ has $K_5$ as a minor in Subsection~\ref{subsec:7v3}.

\subsection{Case of $(G \setminus e_1) \setminus e_2 \simeq H_1$}\label{subsec:7v1}\

First suppose that $e_1$ is not connected to $v_7$.
Assume that $e_1$ is not connected to any vertex in $\{v_4,v_5,v_6\}$.
There are two cases of the connection of $e_2$ such that it is connected at most one vertex in $\{v_4,v_5,v_6\}$ or it connects two of them.
In the first case, we assume that $e_2$ is not connected to $v_4$ and $v_5$, and in the second case, we assume that $e_2$ connects $v_4$ and $v_5$ as drawn in Figure~\ref{fig:v7} (a).
Since both $v_4$ and $v_5$ are only adjacent to $v_1$, $v_2$ and $v_3$ in a planar graph $G \setminus \{ v_4, v_5 \}$, $G$ is achirally embeddable by Lemma~\ref{lem:ae}, in both cases.
Thus $e_1$ is connected to at least one vertex in $\{v_4,v_5,v_6\}$.
Without loss of generality, we assume that $e_1$ connects $v_4$ and $v_5$, since $G \setminus e_2$ is simple as drawn in Figure~\ref{fig:v7} (b).
Then $G \setminus e_2$ has two mirror symmetrical embeddings $M(G \setminus e_2)^{v_2}_{v_3}$ and $M(G \setminus e_2)^{v_5}_{v_6}$.
By Lemma~\ref{lem:c4}, $G$ is achirally embeddable.
Thus both $e_1$ and $e_2$ are connected to $v_7$.

Now suppose that $e_1$ is not connected to any vertex in $\{v_4,v_5,v_6\}$.
Then at least two vertices in $\{v_4,v_5,v_6\}$ which are not connected to $e_2$, say $v_4$ and $v_5$.
Since both $v_4$ and $v_5$ are only adjacent to $v_1$, $v_2$ and $v_3$ in a planar graph $G \setminus \{ v_4, v_5 \}$, $G$ is achirally embeddable by Lemma~\ref{lem:ae}.
So we assume that $e_1$ is connected to $v_4$ as drawn in Figure~\ref{fig:v7} (c).
Then $G \setminus e_2$ has two mirror symmetrical embeddings $M(G \setminus e_2)^{v_2}_{v_3}$ and $M(G \setminus e_2)^{v_5}_{v_6}$.
By Lemma~\ref{lem:c4}, $G$ is achirally embeddable.

\subsection{Case of $(G \setminus e_1) \setminus e_2 \simeq H_2$}\label{subsec:7v2}\

If both $e_1$ and $e_2$ are not connected to $v_2$ and $v_3$, then both $v_2$ and $v_3$ are only adjacent to $v_4$, $v_5$ and $v_6$ in a planar graph $G \setminus \{ v_2, v_3 \}$.
Thus $G$ is achirally embeddable by Lemma~\ref{lem:ae}.
So at least one of $e_1$ and $e_2$ is connected to $v_2$ or $v_3$.
Similarly, at least one of $e_1$ and $e_2$ is connected to $v_5$ or $v_6$.
Without loss of generality, we assume that $e_1$ is connected to $v_2$, and $e_2$ is connected to $v_5$.
If $e_1$ is connected to $v_3$ as drawn in Figure~\ref{fig:v7} (d), then $G \setminus e_2$ has two mirror symmetrical embeddings $M(G \setminus e_2)^{v_2}_{v_3}$ and $M(G \setminus e_2)^{v_5}_{v_6}$.
By Lemma~\ref{lem:c4}, $G$ is achirally embeddable.
Thus $e_1$ is not connected to $v_3$, and similarly $e_2$ is not connected to $v_6$.
First assume that both $e_1$ and $e_2$ are not connected to $v_7$.
Since $G$ is simple, $e_1$ is connected to $v_1$, and $e_2$ is connected to $v_4$ as drawn in Figure~\ref{fig:v7} (e).
Then $G$ is isomorphic to $\mathbf{12}_2^7$ which is a minor minimal intrinsically chiral graph.
Now assume that $e_1$ is connected to $v_7$.
Then $e_2$ is connected to either $v_4$ or $v_7$ as drawn in Figure~\ref{fig:v7} (f).
In both cases, $G$ is isomorphic to $\mathbf{12}_1^7$ which is a minor minimal intrinsically chiral graph.

\begin{figure}[h!]
    \includegraphics{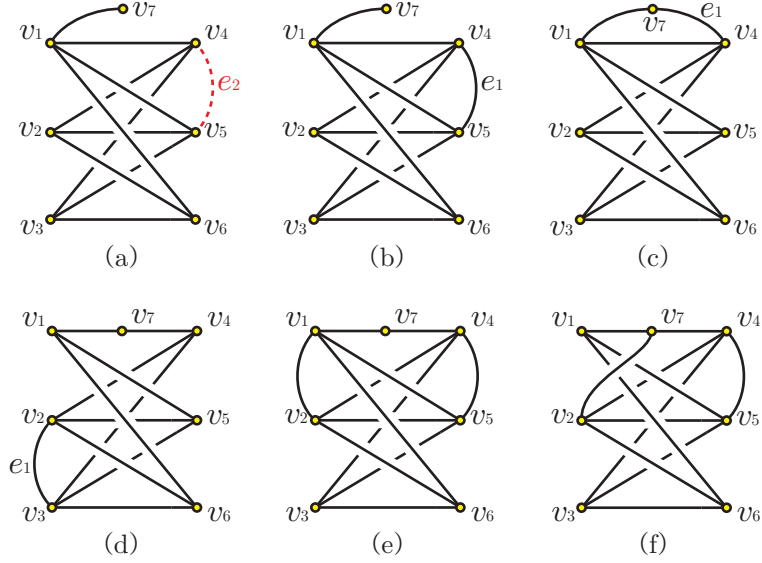}
    \caption{Case of $|G|=7$}
    \label{fig:v7}
\end{figure}

\subsection{$G$ has $K_5$ as a minor}\label{subsec:7v3}\

Now suppose that $G$ has $K_5$ as a minor.
Then there exist two edges $e_1$ and $e_2$ such that $K_5$ is obtained from $G$ by using two edge contractions at $e_1$ and $e_2$.
$$\xymatrix{
\ \ G \ \ \ar[r]^-{ \mathbb {C}} & \ \ G / e_1 \ \ \ar[r]^-{ \mathbb {C}} & \ \ \stackbelow{(G / e_1) / e_2}{K_5}\ \ \\
 }$$
We remark that if $e_1$(or $e_2$) connects two degree 3 vertices in $G/e_2$(or $G/e_1$), then $G/e_2$(or $G/e_1$) has $K_{3,3}$ as a minor.
This case has already been dealt with in the previous case that $G$ has $K_{3,3}$ as a minor.
So each $e_1$ and $e_2$ is connected at least one vertex with degree 1 or 2.
Let $v_1$ and $v_2$ be vertices with degree 1 or 2 in $G$ which are connected to $e_1$ and $e_2$, respectively.
Then remaining five vertices $v_3$, $v_4$, $v_5$, $v_6$ and $v_7$ have degree 4 or more.
If there are two vertices $v_3$ and $v_4$ which are not adjacent to $v_1$ and $v_2$, then both $v_3$ and $v_4$ are only adjacent to $v_5$, $v_6$ and $v_7$ in a planar graph $G\setminus \{v_3,v_4\}$.
Thus $G$ is achirally embeddable by Lemma~\ref{lem:ae}.
So without loss of generality, we assume that $v_1$ is adjacent to $v_3$ and $v_4$, and $v_2$ is adjacent to $v_5$ and $v_6$.
Then both $v_3$ and $v_4$ are only adjacent to $v_1$, $v_5$, $v_6$ and $v_7$ in a planar graph $G \setminus \{ v_3, v_4 \}$.
Thus $G$ is achirally embeddable by Lemma~\ref{lem:ae}.

\section{The order of $G$ is eight}\label{sec:8v}

In this section, we assume that $G$ is a non-planar graph of order eight and size twelve.
So $G$ has $K_{3,3}$ as a minor.
This implies that there exist three edges $e_1$, $e_2$ and $e_3$ such that $K_{3,3}$ is obtained from $G$ by using edge deletion at $e_1$, and two edge contractions at $e_2$ and $e_3$.
$$\xymatrix{
\ \ G \ \ \ar[r]^-{ \mathbb {D}} & \ \ G \setminus e_1 \ \ \ar[r]^-{ \mathbb {C}} & \ \ (G \setminus e_1) / e_2 \ \ \ar[r]^-{ \mathbb {C}} & \ \ \stackbelow{((G \setminus e_1) / e_2) /e_3}{K_{3,3}}\ \ \\
 }$$
Similar with Section~\ref{sec:7v}, we know that all four graphs in the process are simple.
Among two graphs $(G \setminus e_1) / e_2$ and $(G \setminus e_1) / e_3$, we have two cases that at least one of them is isomorphic to $H_1$, and all of them are isomorphic to $H_2$.
We prove the first case in Subsection~\ref{subsec:8v1} and the second case in Subsection~\ref{subsec:8v2}.  

\subsection{Case of $(G \setminus e_1) / e_2 \simeq H_1$}\label{subsec:8v1}\

Let $v_8$ be a vertex in $G \setminus e_1$ which is connected to $e_2$.
We first claim that $v_8$ is adjacent to a vertex in $\{ v_4, v_5, v_6 \}$ in $G\setminus e_1$.
Suppose for the contradiction that $v_8$ is not adjacent to any vertex in $\{ v_4, v_5, v_6 \}$.
Similar with the claim in Subsection~\ref{subsec:7v1}, there are two cases that $e_1$ is connected at most one vertex in $\{v_4,v_5,v_6\}$ or it connects two of them.
In the first case, we assume that $e_1$ is not connected to $v_4$ and $v_5$, and in the second case, we assume that $e_1$ connects $v_4$ and $v_5$.
Since both $v_4$ and $v_5$ are only adjacent to $v_1$, $v_2$ and $v_3$ in a planar graph $G \setminus \{ v_4, v_5 \}$, $G$ is achirally embeddable by Lemma~\ref{lem:ae}. 
Thus $v_8$ is adjacent to a vertex in $\{ v_4, v_5, v_6 \}$ in $G\setminus e_1$.
Without loss of generality, we assume that $v_8$ is adjacent to $v_4$ in $G \setminus e_1$.
If $v_8$ is not adjacent to any $v_2$ and $v_3$, then $G \setminus e_1$ has two mirror symmetrical embeddings $M(G \setminus e_1)^{v_2}_{v_3}$ and $M(G \setminus e_1)^{v_5}_{v_6}$.
So $G$ is achirally embeddable by Lemma~\ref{lem:c4}.
Thus we further assume that $v_8$ is adjacent to $v_2$ as drawn in Figure~\ref{fig:v80} (a).

Now consider the connection of $e_1$.
If $e_1$ connects $v_5$ and $v_6$ (or similarly $e_1$ is not connected to any vertex of them), then both $v_5$ and $v_6$ are only adjacent to $v_1$, $v_2$ and $v_3$ in a planar graph $G \setminus \{ v_5, v_6\}$.
So $G$ is achirally embeddable by Lemma~\ref{lem:ae}.
Thus we assume that $e_1$ is connected to $v_5$ and not $v_6$.
There are three vertices $v_4$, $v_7$ and $v_8$ which can be connected to $e_1$.
If $e_1$ is connected to $v_4$, then $G$ is isomorphic to $\mathbf{12}_1^8$ which is minor minimal intrinsically chiral as drawn in Figure~\ref{fig:v80} (b).
If $e_1$ is connected to $v_7$, it has a $\mathbf{11}_1^8$ as a minor.
If $e_1$ is connected to $v_8$, it has a mirror symmetry embedding as drawn in Figure~\ref{fig:v80} (c).

\begin{figure}[h!]
    \includegraphics{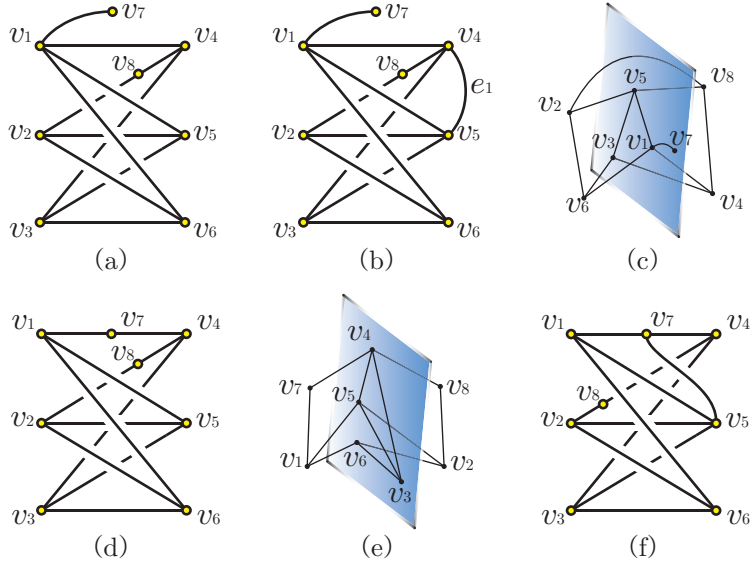}
    \caption{Case of $|G|=8$}
    \label{fig:v80}
\end{figure}

\subsection{Case of $(G \setminus e_1) / e_2 \simeq H_2$}\label{subsec:8v2}\

Let $v_8$ be a vertex in $G \setminus e_1$ which is connected to $e_2$.
Since $(G \setminus e_1) / e_2 \simeq (G \setminus e_1) / e_3 \simeq H_2$, $v_8$ is a degree 2 vertex.
If $v_8$ is not adjacent to any vertex in $\{v_2,v_3,v_5,v_6\}$, then $G \setminus e_1$ has two mirror symmetry embeddings $M(G \setminus e_1)^{v_2}_{v_3}$ and $M(G \setminus e_1)^{v_5}_{v_6}$.
So $G$ is achirally embeddable by Lemma~\ref{lem:c4}.
If $v_8$ is adjacent to two vertices in $\{v_2,v_3,v_5,v_6\}$, then $G$ has a $\mathbf{11}_1^8$ as a minor.
Without loss of generality, we assume that $v_8$ is adjacent to $v_2$ and $v_4$ as drawn in Figure~\ref{fig:v80} (d).
If $e_1$ connects $v_5$ and $v_6$ (or similarly $e_1$ is not connected to any vertex of them), then both $v_5$ and $v_6$ are only adjacent to $v_1$, $v_2$ and $v_3$ in a planar graph $G \setminus \{ v_5, v_6\}$.
So $G$ is achirally embeddable by Lemma~\ref{lem:ae}.
Thus we assume that $e_1$ is connected to $v_5$ and not $v_6$.
There are two vertices $v_4$ and $v_7$(or $v_8$) which can be connected to $e_1$.
If $e_1$ is connected to $v_4$ then it has a mirror symmetry embedding as drawn in Figure~\ref{fig:v80} (e).
If $e_1$ is connected to $v_7$ as drawn in Figure~\ref{fig:v80} (f), then $G$ is isomorphic to $\mathbf{12}_2^8$ which is minor minimal intrinsically chiral.

\section{The order of $G$ is nine}\label{sec:9v}

In this section, we assume that $G$ is a non-planar graph of order nine and size twelve.
So $G$ has $K_{3,3}$ as a minor.
This implies that there are three edges $e_1$, $e_2$ and $e_3$ such that $K_{3,3}$ is obtained from $G$ by using three edge contraction at $e_1$, $e_2$ and $e_3$.
$$\xymatrix{
\ \ G \ \ \ar[r]^-{ \mathbb {C}} & \ \ G / e_1 \ \ \ar[r]^-{ \mathbb {C}} & \ \ (G / e_1) / e_2 \ \ \ar[r]^-{ \mathbb {C}} & \ \ \stackbelow{((G / e_1) / e_2)/e_3}{K_{3,3}}\ \ \\
 }$$
Similar with Section~\ref{sec:7v} and \ref{sec:8v}, we know that all four graphs in the process are simple.
Among three graphs $(G / e_1) / e_2$, $(G / e_1) / e_3$ and $(G / e_2) / e_3$, we have two cases that at least one of them is isomorphic to $H_1$, and all of them are isomorphic to $H_2$.
We prove the first case in Subsection~\ref{subsec:9v1} and the second case in Subsection~\ref{subsec:9v2}.  

\subsection{Case of $(G / e_1) / e_2 \simeq H_1$}\label{subsec:9v1}\

Let $v_8$ and $v_9$ be vertices which is connected to $e_1$ and $e_2$ in $G$, respectively.
If $v_8$ or $v_9$ are adjacent to at most one vertex in $\{v_4,v_5,v_6\}$, then at least two vertices, say $v_4$ and $v_5$, in the set are only adjacent to $v_1$, $v_2$ and $v_3$ in a planar graph $G \setminus \{v_4,v_5\}$.
So $G$ is achirally embeddable by Lemma~\ref{lem:ae}.
Similarly, if both $v_8$ and $v_9$ are not adjacent to $\{v_2$ and $v_3\}$, then $G$ is achirally embeddable by Lemma~\ref{lem:ae}.
Without loss of generality, we assume that $v_8$ is adjacent to $v_2$ and $v_4$, and $v_9$ is adjacent to $v_5$.
Then we have two cases that $v_9$ is a degree 1 or 2 vertex in $G$.
If $v_9$ is a degree 1 vertex in $G$ as drawn in Figure~\ref{fig:v9} (a), then $G$ is isomorphic to $\mathbf{12}_1^9$ which is minor minimal intrinsically chiral.
So $v_9$ is a degree 2 vertex in $G$.
If $v_9$ is adjacent to $v_1$ or $v_3$, then $G$ contains $\mathbf{11}_1^8$ as a minor.
If $v_9$ is adjacent to $v_2$ then $G$ has a mirror symmetrical embedding as drawn in Figure~\ref{fig:v9} (b).

\begin{figure}[h!]
    \includegraphics{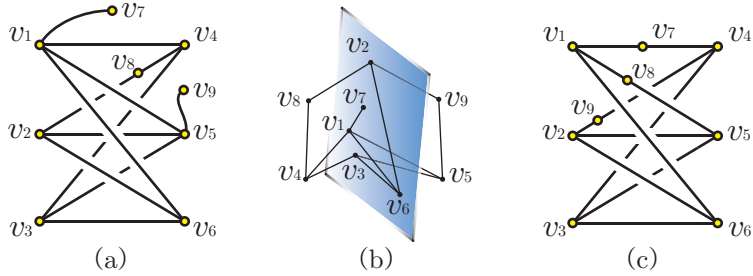}
    \caption{Case of $|G|=9$}
    \label{fig:v9}
\end{figure}

\subsection{Case of $(G / e_1) / e_2 \simeq H_2$}\label{subsec:9v2}\

Similar with the previous case, let $v_8$ and $v_9$ be vertices which is connected to $e_1$ and $e_2$ in $G$, respectively.
If $v_8$(or $v_9$) is not adjacent to any vertex in $v_1$ and $v_4$ in $G / e_2$(or $G / e_1$), then $G$ has $\mathbf{11}_1^8$ as a minor.
This implies that $v_8$ and $v_9$ are adjacent to at most one vertex in $\{v_2,v_3,v_5,v_6\}$.
Suppose that both $v_8$ and $v_9$ are not adjacent to any vertex in $v_2$ and $v_3$.
Then both $v_2$ and $v_3$ are only adjacent to $v_4$, $v_5$ and $v_6$ in a planar graph $G \setminus \{v_2,v_3\}$.
So $G$ is achirally embeddable by Lemma~\ref{lem:ae}.
Thus $v_8$ or $v_9$ is adjacent to at least one vertex in $\{v_2,v_3\}$.
Similarly, $v_8$ or $v_9$ is adjacent to at least one vertex in $\{v_5,v_6\}$.
Without loss of generality, we assume that $v_8$ is adjacent to $v_1$ and $v_5$, and $v_9$ is adjacent to $v_2$ and $v_4$ as drawn in Figure~\ref{fig:v9} (c).
Then $G$ has $\mathbf{11}_1^8$ as a minor.
Therefore, $G$ is not minor minimal intrinsically chiral in this case.

\section*{Acknowledgements}

The first author(Howon Choi) was supported by the Basic Science Research Program through the National Research Foundation of Korea (NRF) funded by the Ministry of Education (NRF-2022R1I1A1A01053856).
The corresponding author(Hyoungjun Kim) was supported by the National Research Foundation of Korea (NRF) grant funded by the Korea government Ministry of Science and ICT(NRF-2021R1C1C1012299).
The third author(Sungjong No) was supported by the National Research Foundation of Korea(NRF) grant funded by the Korea government Ministry of Science and ICT(NRF-2020R1G1A1A01101724).



\bibliography{mmic12.bib} 

\begin{thebibliography}{10}

\bibitem{BM}
J.~Barsotti and T.~W. Mattman.
\newblock Graphs on 21 edges that are not 2-apex.
\newblock {\em Involve, a Journal of Mathematics}, 9(4):591--621, 2016.

\bibitem{CKN}
H.~Choi, H.~Kim, and S.~No.
\newblock The minor minimal intrinsically chiral graphs.
\newblock {\em Discrete Applied Mathematics}, 291:237--245, 2021.

\bibitem{CG}
J.~H. Conway and C.~M. Gordon.
\newblock Knots and links in spatial graphs.
\newblock {\em Journal of Graph Theory}, 7:445--453, 1983.

\bibitem{DA}
K.~A. Donovan, J.~An, R.~P. Nowak, J.~C. Yuan, E.~C. Fink, B.~C. Berry, B.~L.
  Ebert, and E.~S. Fischer.
\newblock Thalidomide promotes degradation of sall4, a transcription factor
  implicated in duane radial ray syndrome.
\newblock {\em Elife}, 7:e38430, 2018.

\bibitem{MB}
M.~el~Mouelhi and K.~W. Bock.
\newblock Stereoselective (s)-and (r)-naproxen glucuronosyl transferases of rat
  liver.
\newblock {\em Drug metabolism and disposition}, 19(2):304--308, 1991.

\bibitem{F}
E.~Flapan.
\newblock Symmetries of möbius ladders.
\newblock {\em Mathematische Annalen}, 283:271--283, 02 1989.

\bibitem{FF}
E.~Flapan and W.~Fletcher.
\newblock Intrinsic chirality of multipartite graphs.
\newblock {\em Journal of Mathematical Chemistry}, 51(7):1853--1863, 2013.

\bibitem{FW1}
E.~Flapan and N.~Weaver.
\newblock Intrinsic chirality of complete graphs.
\newblock {\em Proceedings of the American Mathematical Society},
  115(1):233--236, 1992.

\bibitem{FW2}
E.~Flapan and N.~Weaver.
\newblock Intrinsic chirality of 3-connected graphs.
\newblock {\em Journal of Combinatorial Theory, Series B}, 68(2):223--232,
  1996.

\bibitem{G}
J.~Gal.
\newblock The discovery of stereoselectivity at biological receptors: Arnaldo
  piutti and the taste of the asparagine enantiomers—history and analysis on
  the 125th anniversary.
\newblock {\em Chirality}, 24(12):959--976, 2012.

\bibitem{GMN}
N.~Goldberg, T.~W. Mattman, and R.~Naimi.
\newblock Many, many more intrinsically knotted graphs.
\newblock {\em Algebraic \& Geometric Topology}, 14(3):1801--1823, 2014.

\bibitem{GH}
R.~K. Guy and F.~Harary.
\newblock On the m{\"o}bius ladders.
\newblock {\em Canadian Mathematical Bulletin}, 10:493--496, 1967.

\bibitem{JAM}
I.~A. Jaffe, K.~Altman, P.~Merryman, et~al.
\newblock The antipyridoxine effect of penicillamine in man.
\newblock {\em The Journal of Clinical Investigation}, 43(10):1869--1873, 1964.

\bibitem{KMO}
H.~Kim, T.~Mattman, and S.~Oh.
\newblock Bipartite intrinsically knotted graphs with 22 edges.
\newblock {\em Journal of Graph Theory}, 85(2):568--584, 2017.

\bibitem{K}
C.~Kuratowski.
\newblock Sur le problème des courbes gauches en topologie.
\newblock {\em Fundamenta Mathematicae}, 15:271--283, 1930.

\bibitem{LKLO}
M.~Lee, H.~Kim, H.~J. Lee, and S.~Oh.
\newblock Exactly fourteen intrinsically knotted graphs have 21 edges.
\newblock {\em Algebraic \& Geometric Topology}, 15(6):3305--3322, 2016.

\bibitem{M}
T.~W. Mattman.
\newblock Forbidden minors: Finding the finite few.
\newblock In {\em A Primer for Undergraduate Research: From Groups and Tiles to
  Frames and Vaccines}, pages 85--97. Springer, 2017.

\bibitem{PB}
E.~A. Peets and D.~A. Buyske.
\newblock Comparative metabolism of ethambutol and its l-isomer.
\newblock {\em Biochemical Pharmacology}, 13(10):1403--1419, 1964.

\bibitem{RST}
N.~Robertson, P.~Seymour, and R.~Thomas.
\newblock Sachs' linkless embedding conjecture.
\newblock {\em Journal of Combinatorial Theory, Series B}, 64(2):185--227,
  1995.

\bibitem{RS}
N.~Robertson and P.~D. Seymour.
\newblock Graph minors. xx. wagner's conjecture.
\newblock {\em Journal of Combinatorial Theory, Series B}, 92(2):325--357,
  2004.

\bibitem{S}
J.~Simon.
\newblock Topological chirality of certain molecules.
\newblock {\em Topology}, 25(2):229--235, 1986.

\bibitem{W}
K.~Wagner.
\newblock {\"U}ber eine eigenschaft der ebenen komplexe.
\newblock {\em Mathematische Annalen}, 114(1):570--590, 1937.

\end{thebibliography}
\bibliographystyle{abbrv}

\end{document}